\DeclareMathOperator{\Cay}{Cay}
\title{A characterisation of virtually free groups via minor exclusion}
\author{A. Khukhro}
\theoremstyle{plain}
\newtheorem{Thm}{Theorem}
\newtheorem{Prop}[Thm]{Proposition}
\newtheorem{Corollary}[Thm]{Corollary}
\newtheorem{Lemma}[Thm]{Lemma}
\theoremstyle{definition}
\newtheorem*{thm}{Theorem}
\newtheorem*{question}{Question}
\newtheorem*{acknow}{Acknowledgements}
\begin{document}

\begin{abstract}
We give a new characterisation of virtually free groups using graph minors. 
Namely, we prove that a finitely generated, infinite group is virtually free if and only if for any finite generating set, the corresponding Cayley graph is minor excluded. 
This answers a question of Ostrovskii and Rosenthal. 
The proof relies on showing that a finitely generated group that is minor excluded with respect to every finite generating set is accessible, using a graph-theoretic characterisation of accessibility due to Thomassen and Woess. 
\end{abstract}

\maketitle

\section{Introduction}

A finite graph $\Delta$ is a \emph{minor} of the graph $\Gamma$ if $\Delta$ can be obtained from $\Gamma$ by contracting edges and deleting edges and vertices. A graph $\Gamma$ is said to be \emph{minor excluded} if there exists some finite graph that is not a minor of $\Gamma$. 

Many interesting graph-theoretic properties can be characterised using minor exclusion, or more precisely, exclusion of specific minors. 
The famous theorem of Kuratowski \cite{Kur} states that a graph is planar if and only if it does not have the graphs $K_5$ and $K_{3,3}$ as minors. 
In fact, it is a consequence of the relatively recent Robertson--Seymour Theorem \cite{RS} that any minor-closed property of finite graphs (i.e. a property preserved by passing to a minor) can be characterised via a finite set of forbidden minors -- i.e. for such a property, there is a finite set of graphs such that having this property is equivalent to not possessing any minors in this finite set. 

One can apply graph-theoretic notions to study finitely generated groups, by considering their Cayley graphs. 
Whether or not a given graph appears as a minor can depend on the choice of generating set -- we must therefore choose whether we would like the graph-theoretic property in question to hold for at least one choice of generating set, or for any choice of generating set. 

There already exists a large body of work on the topic of planarity of Cayley graphs. 
The question of which groups admit a planar Cayley graph was first answered for finite groups by Maschke in 1896. 
Since, there has been much study of this property for finitely generated groups, see for example \cite{AC, Dro, DSS, Geo, Ren}. 

The more general property of having excluded minors has not been explored as much in geometric group theory. 
In \cite{AC}, Arzhantseva and Cherix prove that generic (in a certain precise sense) finitely presented groups admit a $K_{2m+1}$ minor, where $m$ is the number of generators of the groups. 
Our main reference for this topic is \cite{OR}, in which Ostrovskii and Rosenthal prove several fundamental results on minor exclusion in Cayley graphs. 
In particular, they show the following.
\begin{itemize}
\item
If $G$ is a finitely generated group with one end, then there is a finite generating set $S$ of $G$ such that $\Cay(G,S)$ is not minor excluded.
\item
There exist groups (e.g. $\mathbb{Z}^2$) that are minor excluded with respect to one choice of generating set, and not minor excluded with respect to another.
\item
Admitting a Cayley graph that is minor excluded is preserved under free products.
\item
Virtually free groups are minor excluded for any choice of generating set. 
\end{itemize}

It is surprisingly often the case in graph theory that an obviously necessary condition also turns out to be sufficient. 
Ostrovskii and Rosenthal ask the following question about the reverse implication of their final result above.
\begin{question}[\cite{OR}, Problem 4.2]
Is an infinite, finitely generated group that is minor excluded with respect to any choice of generating set necessarily virtually free?
\end{question}

Being virtually free is equivalent to many diverse properties, for example, being quasi-isometric to a tree \cite{GdlH}, being finitely presentable with asymptotic dimension equal to $1$ \cite{Gen,JS}, and having context-free word problem \cite{MS}. 
Many of these characterisations use Dunwoody's theorem on the accessibility of finitely presented groups \cite{Dun} (see \cite{Ant} for proofs of several characterisations that do not depend on this result).

We answer the question of Ostrovskii and Rosenthal affirmatively, providing a new graph-theoretic characterisation of virtually free groups. 
We note that this characterisation does not use the accessibility of finitely presented groups. 

\begin{thm}
A finitely generated, infinite group is virtually free if and only if for any finite generating set, the corresponding Cayley graph is minor excluded.
\end{thm}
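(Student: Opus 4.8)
The forward implication is exactly the last result of Ostrovskii and Rosenthal quoted above, so all of the content lies in the converse. Accordingly, let $G$ be finitely generated and infinite and suppose that $\Cay(G,S)$ is minor excluded for \emph{every} finite generating set $S$. The plan is to establish, in order, that: (a)~$G$ is accessible; (b)~hence $G$ is the fundamental group of a finite graph of groups with finite edge groups, whose vertex groups are finitely generated, each with at most one end; (c)~none of these vertex groups is one-ended; and therefore (d)~$G$ is the fundamental group of a finite graph of \emph{finite} groups, which is one of the classical characterisations of virtual freeness.

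The crux is step~(a), which I would prove by contraposition using the graph-theoretic characterisation of accessibility of Thomassen and Woess: $G$ is accessible precisely when there is an integer $k$ such that any two ends of some (equivalently, every) locally finite Cayley graph of $G$ can be separated by deleting at most $k$ edges. So assume $G$ is \emph{not} accessible. Then for every $k$ the graph $\Cay(G,S)$ contains a pair of ends that no set of fewer than $k$ edges separates, and a Menger-type argument extracts from this $k$ edge-disjoint double rays joining that pair. The task is then to upgrade such infinite configurations into arbitrarily large \emph{complete} minors in some Cayley graph of $G$, i.e.\ to produce a finite generating set $S'$ for which $\Cay(G,S')$ is not minor excluded; at this point vertex-transitivity is indispensable, since there exist inaccessible locally finite planar --- hence minor-excluded --- graphs (for instance a suitably thickened binary tree), so unbounded edge-separation between ends cannot on its own force large complete minors. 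The idea would be to use the $G$-action on the Dunwoody structure tree to arrange the relevant edge-separations coherently across all scales, and then to adjoin finitely many elements to $S$ whose $G$-translates tie the edge-disjoint double rays together densely enough to produce $K_n$ minors for every $n$. I expect this upgrade --- manufacturing unbounded complete minors out of unbounded edge-separations while enlarging the generating set only by a finite amount --- to be the main obstacle; it is really a statement about locally finite transitive graphs rather than about groups.

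Granting (a), step~(b) is the classical structure theory of accessible groups, and it matters for the goal that it uses only the \emph{definition} of accessibility and not Dunwoody's theorem that finitely presented groups are accessible: the iterated splitting over finite subgroups terminates in a finite graph of groups with finite edge groups; the vertex groups are finitely generated because $G$ is and the edge groups are finite; and maximality of the decomposition forces each vertex group to have at most one end, since a vertex group with more than one end would split further over a finite subgroup by Stallings' theorem.

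It remains to carry out steps~(c) and~(d). Suppose some vertex group $H$ in the decomposition is one-ended. Since $H$ is finitely generated and one-ended, the first result of Ostrovskii and Rosenthal quoted above provides a finite generating set $T$ of $H$ for which $\Cay(H,T)$ is not minor excluded, that is, every finite graph is a minor of $\Cay(H,T)$. Choose a finite generating set $S$ of $G$ with $T \subseteq S$. Then $\Cay(H,T)$ is a subgraph of $\Cay(G,S)$, and since any minor of a subgraph of a graph $\Gamma$ is again a minor of $\Gamma$, every finite graph is a minor of $\Cay(G,S)$ as well, so $\Cay(G,S)$ is not minor excluded --- contradicting the hypothesis. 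Hence every vertex group is finite, so $G$ is the fundamental group of a finite graph of finite groups; a finitely generated group of this form is virtually free, which (like step~(b)) does not appeal to Dunwoody's accessibility theorem, and the proof is complete.
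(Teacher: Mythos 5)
Your architecture matches the paper's exactly --- accessibility first, then the graph-of-groups decomposition with finite edge groups and at-most-one-ended vertex groups, then exclusion of one-ended vertex groups via the Ostrovskii--Rosenthal theorem and the subgraph/minor monotonicity argument, then Karrass--Pietrowski--Solitar --- and your steps (b), (c), (d) are correct and essentially identical to what the paper does. But step (a), which you yourself identify as the crux, is not proved: you describe it as ``the main obstacle,'' sketch a hope involving the Dunwoody structure tree and adjoining generators whose translates ``tie the edge-disjoint double rays together densely enough,'' and leave it there. Since the entire novelty of the theorem lives in this step, the proposal has a genuine gap.

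The missing idea is to use a different (and for this purpose far more effective) formulation of the Thomassen--Woess characterisation: a finitely generated group is inaccessible if and only if, for every $m$, its Cayley graph has a \emph{thin end of size at least $m$}, where the size of a thin end is the maximum number of pairwise disjoint one-way infinite rays converging to that \emph{single} end. This is stronger than your edge-separation formulation in exactly the way you need: because all $m$ rays lie in one end, their tails remain in a common component of the complement of any finite ball, so one can connect pairs of rays by paths at larger and larger scales without disturbing connections already built. The second missing ingredient is the concrete generating-set enlargement $S \cup S^2 \cup S^3$: since every integer $N \geq 2$ is of the form $2r+3s$, one can reroute both a connecting path and a ray \emph{along the vertex set of the ray itself} using edges of $S^2 \cup S^3$, producing two disjoint detours and thereby eliminating any unwanted intersection of a connecting path with a third ray. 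Iterating this over all pairs yields disjoint branch sets and connecting paths forming a $K_m$ minor in $\Cay(G, S\cup S^2\cup S^3)$ for every $m$. This also disposes of your worry about planar inaccessible transitive-looking graphs: the construction does not rely on edge-separation numbers alone but on the single-end concentration of the rays together with the freedom to pass to a fixed finite enlargement of the generating set.
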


The ``only if'' direction of the theorem is Theorem 3.7 of \cite{OR}. 
In \Cref{ResultSection}, we first prove that any finitely generated group that is minor excluded with respect to every finite generating set is accessible, using a graph-theoretic characterisation of accessibility due to Thomassen and Woess \cite{TW}.
We then show that if a finitely generated, infinite group is such that all of its Cayley graphs with respect to finite generating sets are minor excluded, then it is virtually free. 
The relevant preliminaries are introduced in Sections \ref{EndSection} and \ref{MinorSection}.

\section{Ends and accessibility}\label{EndSection}

Two one-way infinite paths in a graph $\Gamma$ are said to be equivalent if the tails of the paths remain in the same connected component of $\Gamma \setminus F$ for any finite subset of vertices $F$ of $\Gamma$. 
The set of equivalence classes of one-way infinite paths induced by this equivalence relation is the \emph{set of ends} of $\Gamma$. 

The number of ends of a graph is a quasi-isometry invariant. 
In particular, when the graph in question is the Cayley graph of a finitely generated group $G$, the number of ends is independent of the choice of finite generating set, and we can thus refer to the number of ends of the group $G$.

The following are well-known results linking the number of ends of a group to its algebraic properties.

\begin{Thm}[\cite{Hop}]\label{Hopf}
Let $G$ be a finitely generated group. Then
\begin{itemize}
\item
$G$ has $0$, $1$, $2$, or infinitely-many ends;
\item
$G$ has $0$ ends if and only if it is finite;
\item
$G$ has $2$ ends if and only if it is virtually $\mathbb{Z}$.
\end{itemize}
\end{Thm}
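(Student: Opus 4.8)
The plan is to handle the three clauses separately, working with an arbitrary Cayley graph $\Gamma=\Cay(G,S)$ and using throughout the standard reformulation that the number of ends of a connected, locally finite graph is the supremum, over finite vertex sets $F$, of the number of infinite connected components of $\Gamma\setminus F$, a supremum attained by some $F$ whenever it is finite. I will also exploit that $\Gamma$ is vertex-transitive, so that translates $gF$ of any finite set $F$ are available deep inside any prescribed infinite component; $F$ will always be taken connected (a metric ball, possibly enlarged to absorb the finitely many finite components of its complement). The clause ``$0$ ends iff finite'' is immediate: a finite graph has no ray, while an infinite, connected, locally finite graph has a ray by K\"onig's lemma and removing any finite set leaves an infinite component (else $\Gamma$ would be finite), so it has at least one end.

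For the first clause I would argue by contradiction: suppose $\Gamma$ has $e$ ends with $3\le e<\infty$ and fix a connected finite $F$ with $\Gamma\setminus F=C_1\sqcup\dots\sqcup C_e$, all infinite. Choosing $g$ with $gF\subseteq C_1$ and $gF\cap F=\varnothing$, the connected set $F$ lies in a single component of $\Gamma\setminus gF$, say $gC_1$ after relabelling, and each $C_j$ with $j\ge2$ — connected, disjoint from $gF\subseteq C_1$, and adjacent to $F$ — also lies in $gC_1$. Hence $\Gamma\setminus(F\cup gF)$ has the $2(e-1)$ pairwise distinct infinite components $C_2,\dots,C_e,gC_2,\dots,gC_e$, and $2(e-1)>e$ contradicts maximality of $e$; so the number of ends, when finite, is at most $2$. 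The ``if'' half of the second clause is equally quick: a virtually $\mathbb Z$ group is quasi-isometric to $\mathbb Z$, whose Cayley graph is a line with two ends, and the number of ends is a quasi-isometry invariant.

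The real work is the ``only if'' half of the second clause. Let $G$ have exactly two ends $\varepsilon_\pm$, fix a connected finite $F$ with $\Gamma\setminus F=A_+\sqcup A_-$ (infinite, with $A_\pm$ carrying $\varepsilon_\pm$), and pass to the kernel $G^+$ of the action of $G$ on $\{\varepsilon_+,\varepsilon_-\}$, of index at most $2$; it suffices to show $G^+$ is virtually $\mathbb Z$. The first point is that $hA_+\triangle A_+$ is finite for every $h\in G^+$: a component of the complement of a finite set carries a well-defined set of ends, so $hA_+$ carries only $\varepsilon_+$ and hence $hA_+\cap A_-$ contains no ray; as the complement of a finite set in a locally finite graph has only finitely many components, this forces $hA_+\cap A_-$ (and likewise $hA_+\cap F$) to be finite. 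I would then verify that $\phi(h):=|hA_+\setminus A_+|-|A_+\setminus hA_+|$ is a homomorphism $\phi\colon G^+\to\mathbb Z$ — from the cocycle identity for the ``signed size of the symmetric difference'' together with $G^+$-invariance of cardinalities — and that it is nonzero, since translating $F$ far into $A_+$ produces $g\in G^+$ with $gF\subseteq A_+$, whence $gA_+\subsetneq A_+$ and $\phi(g)=-|A_+\setminus gA_+|<0$.

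Finally I would deduce finite index of $\langle t\rangle$ in $G^+$, where $\phi(t)$ generates the infinite cyclic image of $\phi$: then $t$ has infinite order, $G^+=\ker(\phi)\langle t\rangle$ with $\ker(\phi)\cap\langle t\rangle=1$, so everything reduces to finiteness of $\ker(\phi)$. If $\ker(\phi)$ were infinite it would contain elements of arbitrarily large length, hence — as $\Gamma\setminus F=A_+\sqcup A_-$ — an element $h$ with $|h|$ large lying in, say, $A_+$; for such $h$ one checks $hF\subseteq A_+$ and therefore $\phi(h)\le-|F|<0$, contradicting $h\in\ker(\phi)$. Thus $\ker(\phi)$ is finite, $\langle t\rangle\cong\mathbb Z$ has finite index in $G^+$, and $G$ is virtually $\mathbb Z$. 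I expect the main obstacle to be precisely this last direction, and within it the two technical inputs just used — that $\phi$ is a well-defined homomorphism and that $\ker(\phi)$ is finite, in effect that a two-ended Cayley graph is coarsely a line — everything else (the doubling argument and the easy equivalences) being comparatively routine.
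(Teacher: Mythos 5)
The paper does not prove this statement: it is Hopf's classical theorem, quoted with a citation to \cite{Hop} and used as a black box, so there is no in-paper argument to compare against. Judged on its own, your proof is essentially correct and is the standard one: the doubling argument for ``$0,1,2,$ or $\infty$'' (translating $F$ deep into one infinite component $C_1$ forces all of $C_2,\dots,C_e$ and $F$ into a single component of $\Gamma\setminus gF$, so the remaining translates $gC_2,\dots,gC_e$ land inside $C_1$ and are genuinely new components of $\Gamma\setminus(F\cup gF)$, giving $2(e-1)>e$), and for the two-ended case the almost-invariant-set homomorphism $\phi(h)=\sum_v\bigl(1_{hA_+}(v)-1_{A_+}(v)\bigr)$ on the index-$\le 2$ subgroup $G^+$ fixing both ends, with finite kernel. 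All the supporting claims check out: $hA_+\cap A_-$ is a union of the finitely many components of $\Gamma\setminus(F\cup hF)$, each rayless and hence finite by K\"onig's lemma, so $hA_+\triangle A_+$ is finite and the cocycle identity makes $\phi$ a homomorphism. The one spot that deserves a sentence more care is the assertion that ``translating $F$ far into $A_+$ produces $g\in G^+$ with $gF\subseteq A_+$'': an arbitrary deep translate might swap the two ends. But this is harmless, because your kernel-finiteness argument already supplies what is needed: $G^+$ is infinite, so it contains elements $h$ of arbitrarily large word length, and for such $h$ the connected set $hF$ lies entirely in $A_+$ or entirely in $A_-$, forcing $\phi(h)\leq -|F|$ or $\phi(h)\geq |F|$ respectively; either way $\phi$ is nontrivial and its kernel is finite. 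With that wording adjusted, the proof is complete, and it is exactly the argument one would expect Hopf's theorem to receive if the paper had chosen to include one.
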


\begin{Thm}[\cite{Sta}]
Let $G$ be a finitely generated group. 
Then $G$ has infinitely many ends if and only if $G$ splits as an amalgamated free product $A*_C B$ or HNN-extension $A*_C$ with $C$ finite and $|A/C|\geq 3$ and $|B/C|\geq 2$.
\end{Thm}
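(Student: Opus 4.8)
The plan is to prove the two implications separately. This is Stallings' theorem on the ends of finitely generated groups, so I expect the ``if'' direction to be routine and essentially all of the content to lie in the ``only if'' direction.

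\emph{The ``if'' direction.} Suppose $G$ splits as an amalgam $A *_C B$ with $C$ finite, $|A/C| \geq 3$ and $|B/C| \geq 2$ (the HNN case is analogous). Since $G$ is finitely generated and $C$ is finite, $A$ and $B$ are finitely generated, so with respect to a suitable finite generating set the Cayley graph $\Gamma$ of $G$ is a tree of pieces indexed by the vertices of the infinite, locally finite Bass--Serre tree $T$, with adjacent pieces glued along a translate of the vertex set $C$. Deleting from $\Gamma$ the finitely many gluing sets attached to the edges of $T$ meeting the ball of radius $k$ about a base vertex disconnects $\Gamma$ into at least as many infinite components as $T$ has vertices at distance $k$ from that vertex; since some vertex orbit of $T$ has valence at least $3$, this number is unbounded in $k$, and hence $G$ has infinitely many ends.

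\emph{The ``only if'' direction.} Assume $G$ has infinitely many ends, fix a finite generating set $S$, and work in $\Gamma = \Cay(G,S)$, a connected, locally finite, vertex-transitive graph on which $G$ acts freely by left translation. Since $\Gamma$ has more than one end there is a finite set of edges whose removal separates $\Gamma$ into components at least two of which are infinite; taking such a cut of minimal size and letting $A \subseteq V\Gamma = G$ be the vertex set of a union of these components with both $A$ and $A^* := G \setminus A$ infinite, I obtain an \emph{almost invariant set}: a set $A$ with $A, A^*$ infinite and $\delta A$, the nonempty finite set of edges between $A$ and $A^*$, finite. The crucial step, due to Dunwoody, is that $A$ can be chosen so that the $G$-invariant, complementation-closed family $E := \{gA, gA^* : g \in G\}$ is \emph{nested}, meaning that for any $B, B' \in E$ one of $B \cap B'$, $B \cap (B')^*$, $B^* \cap B'$, $B^* \cap (B')^*$ is empty; the idea is to begin with a cut minimising $|\delta A|$ and repeatedly ``uncross'' crossing translates, minimality of $|\delta A|$ being exactly what makes this process possible and terminating.

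Such a nested family gives rise to a structure tree $T$ on which $G$ acts without inversions, with no global fixed vertex -- $E$ is built to separate the infinitely many ends of $\Gamma$ from one another -- and with finite edge stabilisers: the stabiliser of a cut $A$ permutes the nonempty finite set $\partial A$ of endpoints of $\delta A$, so its kernel on $\partial A$ has finite index and, acting freely on $\Gamma$, is trivial. Bass--Serre theory then produces a genuine splitting $G = A *_C B$ or $G = A *_C$ with $C$ the finite edge stabiliser. For a reduced splitting $|A/C|, |B/C| \geq 2$; and if the splitting made $G$ virtually $\mathbb{Z}$ -- both indices equal to $2$ in the amalgam case, or one of the two inclusions of $C$ into $A$ surjective in the HNN case -- then $G$ would have exactly two ends, contradicting the hypothesis by \Cref{Hopf}. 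After reorganising the factors one therefore has $|A/C| \geq 3$ and $|B/C| \geq 2$, as required.

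I expect the main obstacle to be Dunwoody's nesting step: a minimal-size cut need not be nested with its own translates, and the combinatorial uncrossing argument that produces a $G$-invariant nested family of cuts of bounded size is the technical heart of the theorem. An alternative is Stallings' original argument via bipolar structures on $G$, where the same difficulty reappears as the verification of the bipolar axioms; in either case only finite generation is used, with no appeal to accessibility or finite presentability -- which is what matters here, as $G$ is assumed only finitely generated.
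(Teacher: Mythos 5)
The paper does not prove this statement: it is Stallings' ends theorem, quoted directly from \cite{Sta} (in the sharpened form distinguishing infinitely many ends from two ends), so there is no in-paper argument to compare yours against and your proposal must stand on its own as a proof of a major theorem.

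As an outline it is architecturally sound: the ``if'' direction via the Bass--Serre tree is indeed routine and your account of it is fine, and for the ``only if'' direction you have correctly chosen the Dunwoody structure-tree route and correctly located the difficulty. But locating the difficulty is not resolving it, and as written the proposal leaves essentially the entire content of the theorem unproved. Concretely: (i) a cut $A$ minimising $|\delta A|$ among cuts with both sides infinite need \emph{not} be nested with its own translates, and the passage from ``minimal cuts'' to a $G$-invariant nested family --- whether by showing that only finitely many minimal cuts cross a given one and then selecting an extremal subfamily, or by proving that uncrossing terminates --- is the technical heart of the theorem; you assert that minimality ``makes this process possible and terminating'' without argument. (ii) The construction of the structure tree from the nested family, and the verification that $G$ fixes no vertex of it (which is precisely what rules out a trivial splitting), are likewise only asserted. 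These steps occupy the bulk of every published proof, so the proposal should be read as a correct plan with the main step missing rather than as a proof. The remaining bookkeeping --- finite edge stabilisers from the free action on $\Gamma$, and the upgrade from ``more than one end'' to the index conditions $|A/C|\geq 3$, $|B/C|\geq 2$ by excluding the two-ended case via Hopf's theorem --- is handled correctly.
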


Thus, a group with $\geq 2$ ends can be decomposed into smaller pieces via one of the splittings described above. 
It may be that the groups $A$ and $B$ can themselves be decomposed via an amalgamated free product or HNN extension over a finite subgroup. 
Iterating such (non-trivial) decompositions may not always terminate in a finite number of steps. 
When it does, the group is termed \emph{accessible}.
Otherwise, we say it is \emph{inaccessible}.

Finitely presented groups are known to be accessible by a result of Dunwoody \cite{Dun}, as are groups with a bound on the order of their finite subgroups by a result of Linnell \cite{Lin}.

In \cite{TW}, Thomassen and Woess explore graph-theoretic counterparts to accessibility. 
We now summarise the necessary terminology and results from Section 4 of \cite{TW}.

Given a connected, locally finite graph $\Gamma$, let $\omega$ be an end. 
We say that a pairwise disjoint sequence of vertex subsets $X_0, X_1, \ldots$ is a \emph{defining sequence of vertex sets for $\omega$} if for all $i\geq 0$, 
$$X_{i+1} \subset \Gamma_i \ \ \text{ and } \ \  \Gamma_{i+1} \subset \Gamma_i,$$
where $\Gamma_i$ is the component of $\Gamma \setminus X_i$ which contains $\omega$. 
These conditions imply that a path lies in $\omega$ if and only if it has infinite intersection with $\sqcup_i X_i$.

If there is an integer $m$ such that the $X_i$ can all be chosen to have cardinality $m$, then the end $\omega$ is said to be \emph{thin}, and the minimal such $m$ is called the \emph{size} $s(\omega)$ of $\omega$. 
As shown in the discussion ahead of Proposition 4.1 of \cite{TW}, $s(\omega)$ is the maximum number of pairwise disjoint one-way infinite paths in $\omega$. 

Thomassen and Woess give a graph-theoretic characterisation of accessibility which in the case of finitely generated groups is equivalent to the usual notion of accessibility. 
We summarise the result we will use as follows.

\begin{Thm}[\cite{TW}, Theorem 1.1 and Corollary 8.5]\label{Inaccess}
Let $G$ be a finitely generated group, and let $S$ be a finite generating set of $G$. Then $G$ is inaccessible if and only if for each $m>0$, $\Cay(G,S)$ has a thin end of size $\geq m$. 
\end{Thm}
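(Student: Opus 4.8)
The plan is to route everything through the graph-theoretic notion of accessibility of Thomassen and Woess together with Dunwoody's structure-tree machinery, so that the statement becomes a purely combinatorial fact about vertex-transitive graphs. Call a connected, locally finite graph $\Gamma$ \emph{accessible} if there is a constant $k$ such that any two ends of $\Gamma$ can be separated by deleting at most $k$ edges. The first step is to set up the dictionary between finite edge cuts of $\Cay(G,S)$ and splittings of $G$ over finite subgroups: a finite cut separating two ends of the Cayley graph is, by Stallings' theorem \cite{Sta} and its graph-theoretic refinements, the same data as a nontrivial action of $G$ on a tree with finite edge stabilisers, i.e.\ a splitting $A*_C B$ or $A*_C$ with $C$ finite; conversely such a splitting produces a finite cut of $\Cay(G,S)$ separating ends. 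Iterating this correspondence and feeding a $G$-invariant nested family of such cuts into Dunwoody's structure-tree construction \cite{Dun}, one shows that the iterated splitting process for $G$ terminates precisely when $\Cay(G,S)$ is accessible as a graph; this is essentially Corollary~8.5 of \cite{TW}. It then remains to prove the graph-theoretic equivalence: a locally finite, vertex-transitive graph $\Gamma$ is accessible if and only if the sizes of its thin ends are uniformly bounded.

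For the implication that an inaccessible $\Gamma$ has thin ends of every size, I would argue as follows. If $\Gamma$ is inaccessible then for every $k$ there are two ends not separable by $k$ edges; running this through the structure-tree construction forces, for every $m$, an infinite properly descending chain $Y_0 \supset Y_1 \supset \cdots$ of finite vertex cuts, pairwise disjoint and each of cardinality $\geq m$, all converging to a single end $\omega$. Choosing the $Y_i$ of minimal cardinality within their nesting class, they form a defining sequence for $\omega$ of $m$-element sets, so $\omega$ is thin of size $\leq m$; a Menger/max-flow argument between $Y_0$ and $Y_i$ for large $i$ then produces $\geq m$ pairwise disjoint rays in $\omega$, so in fact $s(\omega)\geq m$. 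Vertex-transitivity (the homogeneity of a Cayley graph) is what lets one upgrade a single separating configuration to a genuine infinite nested chain, i.e.\ it is what manufactures the defining sequence. Hence inaccessibility yields a thin end of every size.

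For the converse I would prove the contrapositive: if $\Gamma$ has a thin end of every size then it is inaccessible, equivalently, an accessible transitive $\Gamma$ has bounded thin ends. Since a transitive graph possessing a thin end is readily seen to have more than one end, the structure-tree theorem presents an accessible $\Gamma$ as, up to quasi-isometry, a tree of one-ended and finite pieces amalgamated along finite edge cuts of uniformly bounded size $k$. An end that meets infinitely many one-ended pieces is thick, so a thin end must eventually remain in the tree part; its defining sequence can then be taken among the finitely many edge cuts it traverses, bounding its size by a function of $k$ and the vertex degrees of $\Gamma$. I expect the main obstacle to be making exactly this last point precise: bounding, via a Menger-type argument, how many pairwise disjoint rays a thin end of an accessible transitive graph can carry in terms of the edge-separation constant, while cleanly separating off the thick-end contributions. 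This is where most of the work in Section~4 and the structure-tree sections of \cite{TW} lies, and I would follow their route — in particular their identification of $s(\omega)$ with the maximal number of pairwise disjoint one-way infinite paths in $\omega$ — rather than look for a shortcut.
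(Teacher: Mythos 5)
The first thing to say is that the paper offers no proof of this statement: it is imported wholesale from Thomassen--Woess \cite{TW} (their Theorem~1.1 combined with their Corollary~8.5), so there is no in-paper argument to compare yours against. Your outline is, in effect, a pr\'ecis of how \cite{TW} themselves proceed --- Theorem~1.1 via the dictionary between finite edge cuts of $\Cay(G,S)$ and splittings over finite subgroups (Stallings \cite{Sta} plus Dunwoody's structure trees \cite{Dun}), and Corollary~8.5 via the combinatorial statement that a locally finite transitive graph is accessible iff the sizes of its thin ends are bounded --- so you have identified the right two ingredients and the right route.

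That said, as a proof your sketch defers exactly the hard steps to \cite{TW} rather than supplying them. Concretely: (i) the phrase ``an infinite properly descending chain $Y_0 \supset Y_1 \supset \cdots$ of finite vertex cuts, pairwise disjoint'' is self-contradictory as written --- you mean that the complementary components containing $\omega$ descend while the cuts themselves are pairwise disjoint, which is the defining-sequence condition from Section~\ref{EndSection}; (ii) the central claim of the forward direction, that mere failure of a uniform edge-separation bound ``forces'' for each $m$ a \emph{single} end admitting a defining sequence of $m$-element cuts carrying $m$ disjoint rays, is asserted rather than argued, and producing that nested family pointed at one end is where essentially all of the work in Sections~4--8 of \cite{TW} lies; (iii) you pass silently between edge cuts (used in the definition of graph accessibility) and vertex cuts (used in the definition of thin ends), which is harmless for bounded-degree graphs but needs to be said. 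Since you explicitly state that you would follow \cite{TW} for these points, the proposal is best read as a correct roadmap to the cited result rather than an independent proof --- which puts it on the same footing as the paper, which simply cites it.
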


\section{Graph minors}\label{MinorSection}

Let $\Gamma$ be a graph and let $\Delta$ be a finite graph. 
Write $V(\Delta)$ to mean the vertex set of $\Delta$. 
We say that $\Delta$ is a \emph{minor} of $\Gamma$ if there exist $|V(\Delta)|$ pairwise-disjoint connected finite subsets $V_1, V_2, \ldots V_{|V(\Delta)|}$ of $V(\Gamma)$ that are in bijection with the vertices of $\Delta$, such that there is an edge between some vertex of $V_i$ and some vertex of $V_j$ in $\Gamma$ whenever the corresponding vertices of $\Delta$ are connected by an edge in $\Delta$. 

We say the graph $\Gamma$ is \emph{minor excluded} if there exists some finite graph that is not a minor of $\Gamma$. 
Note that a graph $\Gamma$ is minor excluded if and only if there is some $m\in \mathbb{N}$ such that the complete graph $K_m$ on $m$ vertices is not a minor of $\Gamma$. 
This is because every finite graph is a subgraph of some $K_m$. 

For groups, whether or not the Cayley graph contains a given minor clearly depends on the choice of generating set.
A very simple example is the group $\mathbb{Z}_5$ taken with the generating sets $S:= \{\pm 1\}$ and $T:=\mathbb{Z}_5$: $\Cay(\mathbb{Z}_5, S)$ is clearly planar and does not contain $K_5$ as a minor, while $\Cay(\mathbb{Z}_5,T)$ is precisely $K_5$. 

In \cite{OR}, Ostrovskii and Rosenthal prove that the property of minor exclusion also depends on the generating set, by exhibiting a generating set with respect to which the group $\mathbb{Z}^2$ is not minor excluded i.e. contains all finite graphs as minors; however $\mathbb{Z}^2$ is planar and thus minor excluded with respect to the standard generating set $\{(\pm 1,0),(0,\pm 1)\}$. Note that this gives an example of a $1$-ended group which can be minor excluded, or not, depending on the generating set. 

The concept of ends defined in the previous section is intuitively a relevant one for minor exclusion. 

We can summarise the known relationships between the number of ends of a finitely generated group and minor exclusion as follows.
\begin{itemize}
\item
$0$-ended groups are finite and are thus minor excluded with respect to any generating set.
\item
$1$-ended groups admit a generating set with respect to which they are not minor excluded (\cite{OR}, Theorem 3.11).
\item
$2$-ended groups are virtually free and are thus are minor excluded with respect to any generating set (\cite{OR}, Theorem 3.7).
\end{itemize}

Only the case of groups with infinitely many ends remains. 
In the next section, we show that there do exist groups with infinitely many ends that admit a Cayley graph that is not minor excluded (\Cref{infinity2}). 
This can also be deduced from our main result, \Cref{result}, which characterises virtually free groups as exactly those groups that are infinite and are minor excluded with respect to any generating set.

Finally, we note the following easy lemma, which we will use in the last section.

\begin{Lemma}\label{Subgroup}
Let $G$ be a finitely generated group and let $H$ be a finitely generated subgroup of $G$. 
If $H$ is not minor excluded for some finite generating set $S$, then there exists a finite generating set of $G$ with respect to which $G$ is not minor excluded. 
\end{Lemma}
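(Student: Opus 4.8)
The plan is to enlarge a ``bad'' generating set of $H$ by an arbitrary finite generating set of $G$. So suppose $S$ is a finite generating set of $H$ for which $\Cay(H,S)$ is not minor excluded, and let $T$ be any finite generating set of $G$. Then $S\cup T$ is a finite generating set of $G$, and I claim that $\Cay(G,S\cup T)$ is not minor excluded, which proves the lemma.

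The first step is to observe that $\Cay(H,S)$ occurs as a subgraph of $\Cay(G,S\cup T)$. Restrict attention to the vertex subset $H\subseteq G$ of $\Cay(G,S\cup T)$: for $h,h'\in H$ there is an edge between $h$ and $h'$ whenever $h^{-1}h'$ lies in $S\cup T$ (or its inverse set), and in particular whenever $h^{-1}h'\in S$ (or its inverse set). Hence the subgraph of $\Cay(G,S\cup T)$ spanned by $H$ contains a copy of $\Cay(H,S)$ on the same vertex set. (More is true: restricting to each left coset $gH$ and keeping only the $S$-labelled edges exhibits $\Cay(G,S\cup T)$ as containing a disjoint union of copies of $\Cay(H,S)$, but a single copy is all that is needed.)

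The second step is the elementary fact that passing to a supergraph never destroys a minor: if a finite graph $\Delta$ is a minor of $\Gamma'$ and $\Gamma'$ is a subgraph of $\Gamma$, then $\Delta$ is a minor of $\Gamma$. Indeed, a family of pairwise-disjoint connected finite vertex sets $V_1,\dots,V_{|V(\Delta)|}$ in $\Gamma'$ witnessing that $\Delta$ is a minor of $\Gamma'$ is still a family of pairwise-disjoint connected finite vertex sets in $\Gamma$ with the required edges present, since adding vertices and edges to the ambient graph preserves connectedness of each $V_i$ and preserves the edges realising those of $\Delta$. Combining the two steps: since $\Cay(H,S)$ is not minor excluded, every complete graph $K_m$ is a minor of $\Cay(H,S)$, hence of $\Cay(G,S\cup T)$, and so by the criterion recalled in \Cref{MinorSection}, $\Cay(G,S\cup T)$ is not minor excluded.

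I do not expect any real obstacle here; this is genuinely an ``easy lemma''. The only points that need a little care are fixing the standard convention that Cayley graphs are taken with respect to symmetric generating sets so that the subgraph inclusion in the first step is literally correct, and the (trivial but worth stating) observation in the second step that enlarging the ambient graph cannot remove a minor. Since the statement only requires the existence of \emph{some} finite generating set of $G$, taking $S\cup T$ is the cleanest choice and there is no need to optimise it further.
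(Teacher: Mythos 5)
Your proposal is correct and follows exactly the paper's argument: take the generating set $S\cup T$, observe that $\Cay(H,S)$ embeds as a subgraph of $\Cay(G,S\cup T)$, and note that minors of a subgraph are minors of the ambient graph. You simply spell out the two steps in more detail than the paper does.
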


\begin{proof}
Let $T$ be a finite generating set of $G$. 
The Cayley graph $\Cay(G,S\cup T)$ contains a copy of $\Cay(H,S)$ as a subgraph, an thus any minors of $\Cay(H,S)$ are also minors of $\Cay(G,S\cup T)$. 
\end{proof}

\section{Characterising virtually free groups}\label{ResultSection}

In \cite{OR}, Ostrovskii and Rosenthal show that a virtually free group is minor excluded with respect to any finite generating set. 
We will now show that a finitely generated, infinite group that is minor excluded with respect to any finite generating set is virtually free.  


We will in fact prove that if a finitely generated group is minor excluded with respect to any finite generating set, then it is accessible. 
We then complete the proof of the theorem with the following proposition.

\begin{Prop}\label{Access}
An accessible, infinite, finitely generated group that is minor excluded with respect to any finite generating set is virtually free. 
\end{Prop}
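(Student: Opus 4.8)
The plan is to argue by induction on the accessibility structure of $G$, reducing to the terminal pieces of a maximal decomposition over finite subgroups. Since $G$ is accessible, it admits a decomposition as a finite graph of groups in which every edge group is finite and every vertex group has at most one end. The key claim is that each such vertex group $G_v$ is finite: if some $G_v$ were one-ended, then by \Cref{infinity2}-type reasoning — more precisely, by Theorem 3.11 of \cite{OR} — $G_v$ would admit a finite generating set $S_v$ with respect to which $\Cay(G_v,S_v)$ is not minor excluded, and then \Cref{Subgroup} would produce a finite generating set of $G$ with respect to which $G$ is not minor excluded, contradicting the hypothesis. (A one-ended vertex group is necessarily infinite and finitely generated, so the hypotheses of both results apply.) Hence every vertex group is finite, so $G$ is the fundamental group of a finite graph of finite groups, and it is a standard fact (Karrass--Pietrowski--Solitar) that such a group is virtually free.

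The one subtlety to address is why, in an accessible group, one may take all vertex groups to have at most one end. This follows by repeatedly applying Stallings' theorem: start from any splitting over a finite subgroup guaranteed by Stallings when $G$ has infinitely many ends, and keep splitting the vertex groups that still have infinitely many ends. Accessibility is exactly the statement that this process terminates; at termination, every vertex group has $0$, $1$, or $2$ ends. A $2$-ended group is virtually $\mathbb{Z}$ (\Cref{Hopf}), hence virtually free, and in particular is the fundamental group of a finite graph of finite groups itself — so after absorbing such pieces we may assume each vertex group has at most one end. Alternatively, one can simply note that $2$-ended vertex groups cause no trouble since virtually-$\mathbb{Z}$ groups are already virtually free, and run the finiteness argument only on the genuinely one-ended vertex groups.

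The main obstacle I expect is not any single step but making the bookkeeping of the graph-of-groups decomposition clean: one must ensure the decomposition is finite (this is precisely accessibility), that the edge and hence amalgamating subgroups are all finite, and that the passage from ``every vertex group is finite (or virtually $\mathbb{Z}$)'' to ``$G$ is virtually free'' is invoked correctly — this last implication is classical but relies on the fundamental group of a finite graph of finite groups being virtually free, which is where one uses that the graph of groups is \emph{finite}. The genuinely new content, and the only place the minor-exclusion hypothesis enters, is the contradiction argument via \cite{OR} Theorem 3.11 together with \Cref{Subgroup}: a one-ended subgroup of $G$ would destroy minor exclusion for some generating set of $G$.
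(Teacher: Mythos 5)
Your proposal is correct and follows essentially the same route as the paper: decompose the accessible group as a finite graph of groups with finite edge groups and vertex groups of at most one end, rule out one-ended vertex groups via Theorem 3.11 of \cite{OR} combined with \Cref{Subgroup}, and conclude via Karrass--Pietrowski--Solitar. The extra discussion of $2$-ended vertex groups is harmless but unnecessary, since the standard accessibility decomposition already yields vertex groups with at most one end.
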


\begin{proof}
Let $G$ be an accessible, infinite, finitely generated group that is minor excluded with respect to any finite generating set. 
Since $G$ is accessible, it can be realised as the fundamental group of a finite graph of groups where each edge group is finite and each vertex group is finitely generated and has $\leq 1$ end. 

By Theorem 3.11 of \cite{OR}, a group with one end admits a finite generating set with respect to which it is not minor excluded. 
Since $G$ is minor excluded for all finite generating sets, by \Cref{Subgroup}, there can be no vertex groups with 1 end. 
Thus, all vertex groups are finite, whence $G$ is virtually free by \cite{KPS}.
\end{proof}

\begin{Thm}\label{Main}
A finitely generated group that is minor excluded with respect to any finite generating set is accessible.
\end{Thm}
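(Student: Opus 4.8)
The plan is to argue by contraposition: assuming $G$ is inaccessible, I will construct a finite generating set $S$ such that $\Cay(G,S)$ contains $K_m$ as a minor for every $m$, hence is not minor excluded. The starting point is \Cref{Inaccess}: inaccessibility of $G$ means that for any fixed finite generating set $S_0$, the Cayley graph $\Gamma_0 := \Cay(G,S_0)$ has, for each $m>0$, a thin end $\omega_m$ of size $\geq m$. Since $s(\omega_m)$ is the maximal number of pairwise disjoint one-way infinite paths contained in $\omega_m$, fixing a large $m$ gives us $m$ pairwise disjoint rays $R_1,\dots,R_m$ all equivalent to a single end $\omega = \omega_m$. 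The idea is that these $m$ rays, together with the infinitely many ``cross-connections'' forced by their all lying in the same end, form a skeleton from which a $K_m$-minor (or at least a $K_{\lfloor m/2\rfloor}$-minor, which suffices since $m$ is arbitrary) can be extracted — but possibly only after enlarging $S_0$ so that the necessary connecting paths become short enough to be disjoint.

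The key steps, in order, would be: (1) Fix $m$ and, via \Cref{Inaccess}, extract a thin end $\omega$ with a defining sequence $X_0,X_1,\dots$ of vertex sets of size $s(\omega)\geq m$ and the $m$ pairwise disjoint rays $R_1,\dots,R_m$ in $\omega$. (2) Observe that because all $R_i$ lie in the same end, for every finite $F$ the tails of all $R_i$ lie in one component of $\Gamma_0\setminus F$; in particular one can find, going out far enough, infinitely many ``levels'' at which every pair $R_i, R_j$ is joined by a path in $\Gamma_0$ avoiding a prescribed finite set. Use the defining sequence and the disjointness of the $X_i$ to choose these connecting paths within disjoint ``annular'' regions $\Gamma_{k}\setminus \Gamma_{k+1}$, so that connectors living in different annuli are automatically disjoint. (3) We need $\binom{m}{2}$ pairwise disjoint connecting paths — one per pair $\{i,j\}$ — each meeting the rays only at its endpoints; allocate one annulus per pair and, inside each annulus, route a path from (a tail vertex of) $R_i$ to (a tail vertex of) $R_j$. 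Contracting each $R_i$ (an infinite connected set — permissible under the definition of minor in \Cref{MinorSection}, which allows branch sets to be any connected subsets, though here one should instead contract a suitable finite connected piece of $R_i$ spanning all the relevant attachment points) to a single branch vertex, and contracting each connecting path, yields $K_m$ as a minor. (4) Since $m$ was arbitrary, $\Gamma_0$ — or the Cayley graph for whatever enlarged generating set was needed in step (2)–(3) to make the connectors short — is not minor excluded, and by \Cref{Subgroup}-type reasoning (or directly, since enlarging the generating set only adds minors) $G$ admits a finite generating set that is not minor excluded, contradicting the hypothesis.

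The main obstacle is step (3): ensuring that the $\binom{m}{2}$ connecting paths can be chosen \emph{pairwise disjoint} and disjoint from the rays except at endpoints. Thinness of the end controls the ``width'' ($s(\omega)=m$ bounds how many disjoint rays there are) but does not by itself hand over disjoint connectors between \emph{all} pairs simultaneously; one must exploit that the rays recur to the same end infinitely often to peel off connectors in successive, spatially separated annular regions, and possibly enlarge the generating set so that a connector can be realised as a single edge or a bounded-length path inside its annulus. Making this separation rigorous — showing the defining sequence $\{X_i\}$ can be thinned so that the regions $\Gamma_{i}\setminus\Gamma_{i+1}$ are genuinely ``independent'' and each contains a short $R_i$-to-$R_j$ path — is the technical heart of the argument. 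A secondary subtlety is that contracting an infinite ray is allowed by the stated minor definition but it is cleaner to first truncate each $R_i$ to a finite connected subpath containing all its attachment points to the chosen connectors, so that all branch sets are finite as the definition in \Cref{MinorSection} requires.
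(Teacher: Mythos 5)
Your overall strategy --- contraposition via \Cref{Inaccess}, extracting $m$ pairwise disjoint rays in a thin end, joining each pair by a connector, and contracting to obtain $K_m$ --- is the same as the paper's. But you have left the central difficulty unsolved, and you say so yourself: you never explain how to make the $\binom{m}{2}$ connectors disjoint from the \emph{other rays}. Your device of allocating one annulus $\Gamma_k\setminus\Gamma_{k+1}$ per pair only separates connectors belonging to different pairs from one another; inside any single annulus all $m$ rays are eventually present (each ray lies in the end, so its tail passes through every separator $X_k$), and since the end is thin the separators have only about $m$ vertices, so a path from $R_i$ to $R_j$ inside that annulus may be forced to run through the other rays $R_\ell$. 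Nothing in your sketch removes these crossings. The paper's resolution is a concrete rerouting trick: pass to the fixed generating set $S\cup S^2\cup S^3$; whenever a connector meets a ray $R_k$ between its first and last intersection points $v_{k,i}$ and $v_{k,j}$, use the fact that every integer $N\geq 2$ can be written as $2r+3s$ to build, \emph{inside the vertex set of $R_k$ itself}, two disjoint paths in $\Cay(G,S\cup S^2\cup S^3)$ --- one from $v_{k,i}$ to $v_{k,j}$ (the new connector segment) and one from $v_{k,i+1}$ to $v_{k,j+1}$ (the new ray segment) --- so that both the connector and the ray survive disjointly. Disjointness between connectors for different pairs is then achieved not by annuli but by enclosing all previous work in a finite ball $B$ and running the next step in $\Cay(G,S)\setminus B$, where the tails of all rays still lie in a single component precisely because they define the same end.

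There is also a logical gap in your step (4): you hedge about ``whatever enlarged generating set was needed,'' but the contrapositive you must prove is that a \emph{single} finite generating set yields a Cayley graph containing $K_m$ as a minor for \emph{every} $m$. If the enlargement were allowed to depend on $m$, you would only conclude that for each $m$ some Cayley graph contains a $K_m$ minor, which does not produce one Cayley graph that fails to be minor excluded. The paper's enlargement $S\cup S^2\cup S^3$ is uniform in $m$, and this uniformity is essential to the conclusion.
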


\begin{proof} 
We will show that an inaccessible group admits a generating set with respect to which it is not minor excluded. 
Suppose that $G$ is inaccessible, and fix some generating set $S$ of $G$. 
Then by \Cref{Inaccess}, for each integer $m>0$, $\Cay(G,S)$ has a thin end $\omega$ of size $\geq m$, and thus, there are $\geq m$ pairwise disjoint one-way infinite paths in $\omega$. 
We will use these to build a $K_m$ minor in the Cayley graph $\Cay(G,S\cup S^2\cup S^3)$, using a technique from the proof of Lemma 3.12 of \cite{OR}. 
Here, $S^k$ denotes the set $\{s_1 s_2 \cdots s_k: s_i\in S\}$. 
This will show that $\Cay(G,S\cup S^2\cup S^3)$ is not minor excluded. 

Let $R_1, R_2, \ldots R_m$ be vertex subsets in $\Cay(G,S)$ making up $m$ pairwise disjoint one-way infinite paths in $\omega$. 
We will now modify these sets so that they form branch sets of a $K_m$ minor in $\Cay(G,S\cup S^2\cup S^3)$.
We will look at pairs $R_i, R_j$ in turn and show that, possibly after some modification, they can be connected by paths $P_{i,j}$ in $\Cay(G,S\cup S^2\cup S^3)$ which do not intersect each other, nor any of the other (possibly modified) sets $R_k$.

We start with the sets $R_1$ and $R_2$. 
Since $\Cay(G,S)$ is connected, there is a path $P_{1,2}$ linking some vertex of $R_1$ to some vertex of $R_2$.
If $P_{1,2}$ does not intersect any of the other sets $R_3, R_4, \ldots, R_m$, then we leave $R_1, R_2$ and $P_{1,2}$ unchanged. 
If $P_{1,2}$ does intersect some $R_k$, then we can modify the sets to remove this intersection as follows. 

Suppose that the vertices of the one-way infinite path $R_k$ are $\{v_{k,1}, v_{k,2}, \ldots\}$, in the order that they appear as the path goes to infinity. 
If $P_{1,2}$ intersects $R_k$ in just one vertex $v_{k,i}$, then remove the offending vertex from $R_k$. 
The new set, which we will continue to call $R_k$, is still connected and still a one-way infinite path in $\Cay(G,S\cup S^2\cup S^3)$, and remains disjoint from $R_1$ and $R_2$. 
If $P_{1,2}$ intersects $R_k$ in two vertices $v_{k,i}, v_{k,i+1}$ appearing consecutively on the path $R_k$, then remove both from $R_k$. 
Again, the new set $R_k$ is still connected and still a one-way infinite path in $\Cay(G,S\cup S^2\cup S^3)$, and remains disjoint from $R_1$ and $R_2$. 

In case of an intersection that doesn't fall under these two cases, we need to modify both $P_{1,2}$ and $R_k$. Let $v_{k,i}$ be the first vertex of $R_k$ that $P_{1,2}$ intersects, and let $v_{k,j}$ be the last vertex that it intersects, with $j> i+1$. 
Remark first that any number $N\geq 2$ can be decomposed as a sum $N=2r +3s$, with $r,s\in \mathbb{N}$. 
So, since there are $j-i\geq 2$ edges along $R_k$ between $v_{k,i}$ and $v_{k,j}$ in $\Cay(G,S)$, we can find a path $L$ using elements in $S^2 \cup S^3$ between $v_{k,i}$ and $v_{k,j}$, and also a path $L'$ using elements in $S^2\cup S^3$ between $v_{k,i+1}$ and $v_{k,j+1}$, such that the vertices of $L$ and $L'$ lie in $R_k$ and $L$ does not intersect $L'$. 
We then modify the path $P_{1,2}$ by replacing the section between $v_{k,i}$ and $v_{k,j}$ by the path $L$, and we modify $R_k$ by replacing the section between $v_{k,i+1}$ and $v_{k,j+1}$ by the path $L'$. 
The new set $R_k$ now forms a one-way infinite path in $\Cay(G,S\cup S^2\cup S^3)$ that does not intersect $R_1$ and $R_2$, nor the modified path $P_{1,2}$ that joins them. 

The above modifications have removed the intersections of $P_{1,2}$ with $R_k$, without introducing any new intersections with other $R_i$, $i\neq 1,2$. 
We can thus look at each $R_i$, $i\neq 1,2$ in turn and remove any intersections with $P_{1,2}$ via the modifications above. 
We will then have a path $P_{1,2}$ joining $R_1$ and $R_2$ that does not intersect any of the other (possibly modified) $R_i$, $i\neq 1,2$. 

We then proceed inductively to join all pairs of sets $R_i, R_j (i\neq j)$ via paths that do not intersect any of the other $R_k$, nor any of the paths already created. 
Once such a path between one pair has been obtained via the process above, we can then find a ball $B$ about the identity in $\Cay(G,S)$ that is large enough to contain all of the paths $P_{i,j}$ that we have formed thus far, and all of the vertices that were used in modifications of the $R_k$.  
We can then consider $\Cay(G,S)\setminus B$. 
Since the one-way infinite paths $R_i$ correspond to the same end $\omega$, the restrictions of the sets $R_i$ to $\Cay(G,S)\setminus B$ are still in the same connected component of $\Cay(G,S)\setminus B$, whence we can run the above procedure on (the restrictions of) the next pair of one-way infinite paths. 
This next round of modifications of the $R_i$ will not destroy the previous connections, as the new modifications take place outside of $B$.

At the end, we will have paths $P_{i,j}$ for every pair of sets $R_i,R_j$, that do not intersect any of the other $R_k$, nor any of the other paths between them.
Moreover, there is some ball $B'$ of finite radius in $\Cay(G,S\cup S^2\cup S^3)$ containing all of the paths $P_{i,j}$. 
For each path $P_{i,j}$ from $R_i$ to $R_j$, attribute the first half of vertices appearing along the path to $R_i$, and the second half to $R_j$ (making a choice for a vertex in the middle, if it exists). 
For each $i$, take the truncation of the set $R_i$ to $B'$ together with the vertices in the paths $P_{i,j}$, $j\neq i$, attributed to $R_i$. 
These sets now form the branch sets of a $K_m$ minor in $\Cay(G,,S\cup S^2\cup S^3)$.
Since we can do this for any $m$, we see that $\Cay(G,S\cup S^2\cup S^3)$ is not minor excluded. 
\end{proof}

From the proof, we can directly deduce the following incidental corollary, which has an implication in the case of infinitely many ends.

\begin{Corollary}\label{infinity1}
An inaccessible group $G$ is not minor excluded with respect to any generating set of the form $S\cup S^2 \cup S^3$, where $S$ is any generating set of $G$. 
\end{Corollary}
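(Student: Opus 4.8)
The plan is to observe that this corollary is simply a restatement of what the proof of \Cref{Main} actually shows. In that proof we fixed an \emph{arbitrary} finite generating set $S$ of the inaccessible group $G$, used \Cref{Inaccess} to obtain, for each integer $m>0$, a thin end $\omega$ of size $\geq m$ together with $m$ pairwise disjoint one-way infinite paths $R_1,\dots,R_m$ in $\omega$, and then performed the modification-and-connection procedure that converts (truncations of) these paths into the branch sets of a $K_m$ minor inside $\Cay(G,S\cup S^2\cup S^3)$. Carrying this out for every $m$ shows that $\Cay(G,S\cup S^2\cup S^3)$ admits arbitrarily large complete minors, and hence is not minor excluded.

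The one point to verify is that the construction in \Cref{Main} is \emph{uniform} in the generating set: no step uses any property of $S$ beyond its being a finite generating set of $G$. The decomposition of an integer $N\geq 2$ as $2r+3s$, the choice of a ball $B$ large enough to absorb the paths and the modifications produced so far, and the final choice of the ball $B'$ are all insensitive to which $S$ we started with. Hence the argument applies verbatim to every finite generating set $S$ of $G$, which is exactly the assertion of the corollary.

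I do not expect any genuine obstacle here; the corollary is an immediate by-product of the proof of \Cref{Main}, and essentially all that remains is to record that the generating set was arbitrary throughout. (Combined with the fact that a finitely generated inaccessible group necessarily has infinitely many ends — a group with $0$, $1$, or $2$ ends being accessible by \Cref{Hopf} and Stallings' theorem — this also produces a group with infinitely many ends admitting a non-minor-excluded Cayley graph, which is the content used in \Cref{infinity2}.)
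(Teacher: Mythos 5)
Your proposal is correct and matches the paper exactly: the paper gives no separate argument for this corollary, stating only that it follows directly from the proof of \Cref{Main}, which indeed fixed an arbitrary finite generating set $S$ and produced a $K_m$ minor in $\Cay(G,S\cup S^2\cup S^3)$ for every $m$. Your observation that the construction is uniform in $S$ is precisely the point being recorded.
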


\begin{Corollary}\label{infinity2}
There exist finitely generated groups with infinitely many ends that are not minor excluded with respect to some finite generating set. 
\end{Corollary}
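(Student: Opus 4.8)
The plan is to produce such a group as a free product, so that both required properties fall out of results already in hand. Let $H$ be a finitely generated one-ended group admitting a finite generating set $S$ for which $\Cay(H,S)$ is not minor excluded; such a pair $(H,S)$ exists by Theorem 3.11 of \cite{OR}, and concretely one may take $H=\mathbb{Z}^2$ together with the generating set constructed there. Set $G := H * \mathbb{Z}$, which is again finitely generated.

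It then remains to check two things. First, $G$ has infinitely many ends: regarding $G = H *_{\{1\}} \mathbb{Z}$ as an amalgamated free product over the trivial, hence finite, subgroup, we have $|H/\{1\}| = |H| \geq 3$ — one-ended groups being infinite by \Cref{Hopf} — and $|\mathbb{Z}/\{1\}| \geq 2$, so $G$ has infinitely many ends by \cite{Sta}. Second, $H$ is a finitely generated subgroup of $G$ that is not minor excluded with respect to $S$, so by \Cref{Subgroup} there is a finite generating set of $G$ with respect to which $G$ is not minor excluded. Hence $G$ is the desired example, and letting $H$ vary gives many such groups.

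I do not expect any genuine obstacle here, since all the ingredients — Theorem 3.11 of \cite{OR}, Stallings' theorem, and \Cref{Subgroup} — are already available; the only point that needs a moment's care is arranging the free product so that the resulting group has infinitely many ends, which is immediate from \cite{Sta}.

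An alternative, closer in spirit to \Cref{infinity1}, would take $G$ to be a finitely generated inaccessible group, such as the one constructed by Dunwoody. Since finite groups, one-ended groups, and virtually $\mathbb{Z}$ groups are all accessible, such a $G$ has more than two ends, hence infinitely many ends by \Cref{Hopf}; and by \Cref{infinity1}, $G$ is not minor excluded with respect to $S \cup S^2 \cup S^3$ for any finite generating set $S$. This has the advantage of also producing an inaccessible example, but relies on the nontrivial fact that finitely generated inaccessible groups exist.
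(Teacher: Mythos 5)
Your argument is correct, but your primary route is genuinely different from the paper's. The paper proves this corollary by invoking Dunwoody's construction of a finitely generated inaccessible group (which necessarily has infinitely many ends) and then applying \Cref{infinity1} — exactly the alternative you sketch at the end. Your main construction, $G = H * \mathbb{Z}$ with $H$ one-ended and not minor excluded for some generating set, is more elementary: it needs only Theorem 3.11 of \cite{OR}, Stallings' theorem (in the easy direction: an explicit splitting over a finite subgroup with the stated index conditions forces infinitely many ends), and \Cref{Subgroup}, all of which check out — in particular $\mathbb{Z}^2 * \mathbb{Z}$ is a perfectly concrete example. What the paper's route buys is that the example is moreover \emph{inaccessible}, which fits the narrative of the section (the corollary is presented as falling out of the proof of \Cref{Main}); what your route buys is independence from the deep and nontrivial fact that inaccessible groups exist, and an accessible (indeed virtually-free-by-no-means, but finitely presented) example. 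Both are valid; yours is arguably the more economical proof of the literal statement.
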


\begin{proof}
By a result of Dunwoody \cite{Du93}, there exist inaccessible groups, which have infinitely many ends. Now apply \Cref{infinity1}. 
\end{proof}

We can now deduce the main theorem, which solves Problem 4.2 of \cite{OR}. 
\begin{Thm}\label{result}
A finitely generated, infinite group is virtually free if and only if it is minor excluded for any finite generating set.
\end{Thm}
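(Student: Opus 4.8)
The plan is to prove the two implications of the equivalence separately, drawing on the results already assembled in this section.

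For the ``only if'' direction, I would simply invoke Theorem~3.7 of \cite{OR}: a virtually free group is minor excluded with respect to every finite generating set. No further argument is required, and in particular this direction does not use that $G$ is infinite.

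For the ``if'' direction, suppose $G$ is a finitely generated, infinite group that is minor excluded with respect to every finite generating set. First I would apply \Cref{Main} to conclude that $G$ is accessible. With accessibility established, $G$ now meets every hypothesis of \Cref{Access} --- it is accessible, infinite, finitely generated, and minor excluded for every finite generating set --- so that proposition immediately yields that $G$ is virtually free. Combining the two implications gives the theorem.

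The substantive work has already been carried out in \Cref{Main} and \Cref{Access}; the step here is a short assembly and presents no obstacle of its own. If one insists on identifying the hard part, it is the proof of \Cref{Main} --- extracting a $K_m$ minor in $\Cay(G,S\cup S^2\cup S^3)$ for every $m$ out of the pairwise disjoint one-way infinite rays of an arbitrarily large thin end, via the iterated rerouting argument inside ever-larger balls --- together with, on the side of \Cref{Access}, the Bass--Serre-theoretic description of an accessible group as the fundamental group of a finite graph of groups with finite edge groups and finitely generated vertex groups of at most one end, the use of Theorem~3.11 of \cite{OR} and \Cref{Subgroup} to force every vertex group to be finite, and the appeal to \cite{KPS} to conclude virtual freeness. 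Granting all of that, the present statement follows in essentially two lines.
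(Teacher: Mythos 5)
Your proposal is correct and follows exactly the same route as the paper: the ``only if'' direction is quoted from Theorem~3.7 of \cite{OR}, and the ``if'' direction is obtained by combining \Cref{Main} (accessibility) with \Cref{Access}. Nothing further is needed.
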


\begin{proof}
The ``only if'' direction of the theorem is Theorem 3.7 of \cite{OR}.

For the other implication, \Cref{Main} tells us that such a group must be accessible. 
\Cref{Access} now completes the proof of the theorem.
\end{proof}

As being virtually free is equivalent to being quasi-isometric to a tree (see \cite{GdlH} for a proof), we also immediately obtain the following. 

\begin{Corollary}
For finitely generated groups, the property of being minor excluded with respect to any generating set is a quasi-isometry invariant.
\end{Corollary}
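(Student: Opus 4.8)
The plan is to reduce the statement to the characterisation just proved and then invoke the classical fact that virtual freeness is detected by quasi-isometry.

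First I would identify the property in question with virtual freeness. Among finitely generated groups, being minor excluded with respect to every finite generating set is equivalent to being virtually free: for infinite groups this is exactly \Cref{result}, and for finite groups both sides hold trivially (a finite group has a finite Cayley graph for any generating set, hence is minor excluded, and it is virtually free since the trivial subgroup has finite index and is free). So it suffices to show that virtual freeness is a quasi-isometry invariant.

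Next I would recall, as noted in the discussion preceding the statement, that a finitely generated group is virtually free if and only if it is quasi-isometric to a tree \cite{GdlH}. Given this, the invariance is immediate from transitivity of the quasi-isometry relation: if $G$ and $H$ are quasi-isometric finitely generated groups and $G$ is virtually free, then $G$ is quasi-isometric to a tree, hence so is $H$ (composing quasi-isometries), hence $H$ is virtually free. Combining with the first step, the class of finitely generated groups minor excluded for every generating set is closed under quasi-isometry, which is the assertion of the corollary.

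I do not expect any real obstacle here; the entire content has already been established in \Cref{Main} and \Cref{Access}. The only mild care needed is in the boundary case of finite groups and in citing \cite{GdlH} for the quasi-isometry characterisation of virtual freeness, after which the argument is a one-line deduction.
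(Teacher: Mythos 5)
Your proposal is correct and follows essentially the same route as the paper: identify the property with virtual freeness via the main characterisation and then invoke the equivalence of virtual freeness with being quasi-isometric to a tree from \cite{GdlH}. Your explicit treatment of the finite-group case is a minor point of extra care that the paper leaves implicit, but it does not change the argument.
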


\begin{acknow}
Thanks to Yago Antol\'{i}n, Laura Ciobanu, and Henry Wilton for helpful comments on a first draft of this paper. 
\end{acknow}

\end{document}